\documentclass[12pt]{amsart}
\setlength{\textwidth}{6.5in}
\setlength{\textheight}{9in}
\setlength{\topmargin}{-0.5in}
\setlength{\oddsidemargin}{0in}
\setlength{\evensidemargin}{0in}

\usepackage{diagbox}
\usepackage{mathtools}
\mathtoolsset{showonlyrefs}
\usepackage{amsmath}
\allowdisplaybreaks
\usepackage{mathdots}

\renewcommand{\emph}[1]{\textit{#1}}
\usepackage{enumerate,amsmath,amsthm,latexsym,amssymb}
\usepackage{color}\usepackage{graphicx}

\definecolor{brown}{cmyk}{0, 0.72, 1, 0.45}
\definecolor{grey}{gray}{0.5}

\newcommand{\old}[1]{}

\newcounter{rot}

\newcommand{\ignore}[1]{}

\def\ii_(#1,#2){i_{#1}^{#2}}
\newcommand{\proj}{\text{proj}}

\def\bx{{\bf x}}
\def\bu{{\bf u}}
\def\by{{\bf y}}

\def\cC{\mathcal{C}}

\def\e{\varepsilon}

\def\k{\kappa}

\def\l{\lambda}
\def\m{\mu}

\def\R{\Rho}

\newcommand{\F}{F}

\def\Z{\mathbb{Z}}

\newtheorem{theorem}{Theorem}[section]
\newtheorem{question}[theorem]{Question}

\newtheorem{lemma}[theorem]{Lemma}
\newtheorem{corollary}[theorem]{Corollary}

\newtheorem{remthm}[theorem]{Remark}

\newtheorem{notation}[theorem]{Notation}
\newtheorem{claim}[theorem]{Claim}

\newenvironment{remark}{\begin{remthm}\rm }{\end{remthm}}%
\newcounter{thmtemp}

\newcommand{\nospace}[1]{}

\def\path{\operatorname{PATH}}

\newcommand{\N}{\mathbb{N}}

\parindent 0in
\parskip .15in

\def\rank{\text{rank}}

\newcommand{\ex}{\mathrm{ex}}
\newcommand{\exq}{\mathrm{ex}_q}
\newcommand{\exm}{\mathrm{ex}_\L}
\newcommand{\exmf}{\mathrm{ex}_{\F,\L}}
\newcommand{\exem}{\mathrm{aex}_{\L}}

\newcommand{\bex}{\overline{\mathrm{ex}}}
\newcommand{\bexq}{\overline{\mathrm{ex}}_q}
\newcommand{\Lk}{(\L,\k)}
\newcommand{\lk}{(\L,\k)}
\newcommand{\ls}{(\L,S)}

\newcommand{\supp}{\text{supp}}

\renewcommand{\L}{\mathbf{L}}
\renewcommand{\k}{\mathbf{k}}
\renewcommand{\e}{\mathbf{e}}
\renewcommand{\v}{\mathbf{v}}
\newcommand{\y}{\mathbf{y}}

\renewcommand{\F}{\mathbb{F}}
\renewcommand{\R}{\mathbb{R}}
\newcommand{\cf}{\mathcal{F}}

\newcommand{\prank}{a\textrm{-rank}}
\newcommand{\arank}{a\textrm{-rank}}

\begin{document}
\title{Extremal Collections of $k$-Uniform Vectors}
\author{Joseph Briggs}
\author{Wesley Pegden}\thanks{Research supported in part by NSF grant
    DMS 1700365}

\maketitle

\begin{abstract}
We show any matrix of rank $r$ over $\F_q$ can have $\leq \binom{r}{k}(q-1)^k$ distinct columns of weight $k$ if $ k \leq O_q(\sqrt{\log r})$ (up to divisibility issues), and $\leq \binom{r}{k}(q-1)^{r-k}$ distinct columns of co-weight $k$ if $k \leq O_q(r^{2/3})$. This shows the natural examples consisting of only $r$ rows are optimal for both, and the proofs will recover some form of uniqueness of these examples in all cases.
\end{abstract}

\section{Introduction}

The field of extremal combinatorics deals with the asymptotic study of how parameters grow over increasing classes of discrete structures. Recently (see for example \cite{bonin2003introduction}), there has been growing interest in the study of an extremal theory for matroids. This includes an extremal theory for \emph{representable} matroids, whose ground set is the set of columns of some matrix (and independence is given by linear independence).

One standard method for generating \emph{random} representable matroids, see e.g.\ \cite{cooper2016minors}, is as follows. Construct a matrix representation $M$ by generating $m$ randomly chosen columns of some fixed weight $k$ and length $n$. Indeed, when $k=2$ and the base field is $\F_2$, this gives the graphic matroid of the Erd\H{o}s-R\'enyi random graph $G_{n,m}$ (of which $M$ acts as the vertex-edge incidence matrix). 

Our desire is to settle perhaps the most natural extremal question in this setting: how large can $m$ be, upon fixing the ``size'' of such a representable matroid?
It makes little sense to fix the number $n$ of rows, as then one can take all $m=\binom{n}{k}(q-1)^k$ weight-$k$ column vectors, and every possible matrix will just consist of a subset of these columns. So instead, we fix the rank.

Let us take a step back. For a matrix $M$ over the finite field $\F_q$, we are considering the following question:

\begin{question}
What is the maximum number of distinct columns $M$ can have, if each column has weight $k$, that is $k$ nonzero entries, and $M$ has rank $\leq r$?
\end{question}

We denote this value by $\exq (r,k)$.
We can answer this question if $r$ is large enough:

\begin{theorem}\label{t.mainspecial}
For all $k$ and $q$, there is an $R_{k,q}$ such that for all $r \geq R_{k,q}$, 
    \[
  \exq(r,k)=
  \begin{cases}
    \binom{r+1}{k}
     & q = 2 \text{ and } k \text{ even,}\\
    \binom{r}{k}(q-1)^k & \text{ otherwise.}
  \end{cases}
  \]
\end{theorem}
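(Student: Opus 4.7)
The plan is to induct on $r$, reducing to subspaces with no weight-$1$ vectors. Let $V \subseteq \F_q^n$ denote the column space of $M$, so $\dim V = r$. WLOG $V$ is \emph{reduced} (every coordinate functional $\pi_i: V \to \F_q$ is nonzero; deleting trivial coordinates does not change $A_k(V) := |\{v \in V : |v| = k\}|$). Set $I := \{i \in [n] : e_i \in V\}$ and $s := |I|$; then $V = \bigoplus_{i \in I} \langle e_i \rangle \oplus V'$ with $V'$ of dimension $r - s$ and no weight-$1$ vector. Decomposing $v \in V$ as $\sum_{i \in I} \alpha_i e_i + v'$ and grouping by $t := |\{i \in I : \alpha_i \neq 0\}|$ yields the key identity
\[
A_k(V) = \sum_{t=0}^{k} \binom{s}{t}(q-1)^t A_{k-t}(V').
\]

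If $s = r$, then $V' = \{0\}$ and $A_k(V) = \binom{r}{k}(q-1)^k$, so $V$ is the coordinate subspace. Otherwise, the inductive hypothesis on $V'$ (dimension $r-s < r$) gives a bound on $A_j(V')$. In the general case, applying $A_j(V') \leq \binom{r-s}{j}(q-1)^j$ and the Vandermonde identity $\sum_t \binom{s}{t}\binom{r-s}{k-t} = \binom{r}{k}$ recovers $A_k(V) \leq \binom{r}{k}(q-1)^k$. In the special case ($q=2$, $k$ even), applying $A_j(V') \leq \binom{r-s+1}{j}$ (the zero-sum bound), with a parity restriction forcing only even $k-t$ summands to contribute, and the Vandermonde identity $\sum_t \binom{s}{t}\binom{r-s+1}{k-t} = \binom{r+1}{k}$, yields $A_k(V) \leq \binom{r+1}{k}$.

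The main obstacle is establishing the sharp inductive bound on $A_j(V')$ when $V'$ has no weight-$1$ vector: naive induction stalls without extracting a \emph{strict deficit} whenever $V'$ fails to be the coordinate or zero-sum subspace. My plan is to iterate the decomposition using the weight-$2$ vectors of $V'$ as the next layer of structure, or alternatively to work dually---viewing the rows as defining hyperplanes $H_j := \ker \pi_j$ in $V \cong \F_q^r$, so weight-$k$ vectors correspond to points lying outside exactly $k$ of the $H_j$, and bounding such points via a combinatorial analysis of the arrangement. The threshold $R_k$ should emerge from requiring $r$ large enough relative to $k$ for these structural propagations to close; base cases near $r = R_k$ are handled by Singleton-type bounds for low-dimensional codes or direct enumeration. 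Tracking equality in the Vandermonde bounds throughout should also deliver the uniqueness of extremal examples advertised in the abstract.
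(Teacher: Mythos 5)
Your setup is fine as far as it goes --- splitting off the coordinate directions $I=\{i: e_i\in V\}$ and using Vandermonde is a legitimate reduction --- but the proof has a genuine gap exactly where you flag "the main obstacle," and that obstacle is the entire content of the theorem. When $s=0$ the decomposition is vacuous ($V'=V$) and no progress is made, so everything rests on bounding $A_j(V')$ for a subspace with no weight-$1$ vectors. You cannot invoke the theorem inductively for this: the statement only holds for $r-s\geq R_j$, and for small-dimensional $V'$ the claimed bound $A_j(V')\leq\binom{r-s}{j}(q-1)^j$ is simply false. For example, over $\F_2$ the $2$-dimensional space spanned by two disjoint weight-$3$ vectors has $A_3=2>\binom{2}{3}=0$, and more dramatically the dual Hamming code of dimension $d$ has $q^d-1$ vectors of a single weight. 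The proposed remedies ("iterate using weight-$2$ vectors," "Singleton-type bounds for base cases," "direct enumeration near $r=R_k$") are exactly the base cases the authors state they were unable to establish directly; without them the induction does not close. Separately, in the $q=2$, $k$ even branch, the asserted "parity restriction forcing only even $k-t$ summands to contribute" is unjustified ($V'$ need not lie in the even-weight subspace), and the identity $\sum_t\binom{s}{t}\binom{r-s+1}{k-t}=\binom{r+1}{k}$ is then being applied to a sum over a proper subset of $t$'s, so even the arithmetic of that case is not in place.

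For contrast, the paper avoids your obstacle entirely. It works with the affine rank ($a$-rank), deletes a \emph{row} of the matrix (splitting columns by their entry in that row) to get the recursion $\exem^*(r,\mathbf{k})\leq \exem^*(r-1,\mathbf{k})+\sum_i|L_i|\,\exem^*(r-1,\mathbf{k}-e_i)$, and then --- crucially --- sidesteps the missing base cases by studying the deficit $\alpha_r=\exem^*(r,\mathbf{k})-\L^{\mathbf{k}}\binom{r}{\mathbf{k}}$: the recursion shows $\alpha_r$ is eventually non-increasing, it is bounded below by $0$, and a separate equality-case analysis shows that if it ever stabilizes it must equal $0$. That stabilization argument is the key idea your outline is missing; without it (or a genuinely new bound on weight-$j$ vectors in spaces with no weight-$1$ vectors), the proposal does not constitute a proof.
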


When $k=q=2$, this tells us that graphs of graphic matroid rank $\leq r$ have $\leq \binom{r+1}{2}$ edges: this was previously noted in Theorem 2.8 of \cite{briggs2017inverting}, where it was shown for \emph{every} $r$ (not just those sufficiently large). Furthermore, the case $q=2$ was a question asked by Ahlswede, Aydinian and Khachatrian \cite{ahlswede2003maximum}. Khachatrian (according to \cite{ashikhmin2005bounds}) and Kramer \cite{kramer2010most} conjectured the above structure, and the latter
proved it when the number of \emph{rows} of the matrix is $r+1$. Our result confirms their conjecture, but only once $r$ is large enough.

The nature of this question does not change much after replacing ``nonzero'' with ``non-$\beta$'' for an arbitrary $\beta \in \F_q$
(see Section \ref{s.weight}, and more specifically Theorem \ref{t.even}, an affine variant we will use to prove this result).
This effectively answers both questions of this type over $\F_2$. However, for other fields $\F_q$, ``weight $k$'' and ``$k\; 1$'s'' have different meanings, suggesting a complementary version of the original question:

\begin{question}
What is the maximum number of distinct columns $M$ can have, if each column has $k$ zeros, and $M$ has rank $\leq r$?
\end{question}

Denoting this by $\bexq(r,k)$,
we have a corresponding result: 

\begin{theorem}\label{t.co}
Suppose $\F_q \neq \F_2$. For all $k$, there is an $\bar{R}_{k,q}$ such that for all $r \geq \bar{R}_{k,q}$,
 \[
  \bexq(r,k)=
    \binom{r}{k} (q-1)^{r-k}.
  \]

\end{theorem}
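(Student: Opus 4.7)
The plan is to recast the theorem as a subspace bound. Given any subspace $V \subseteq \F_q^n$ of dimension $r$, let $N(V,k)$ denote the number of vectors in $V$ with exactly $k$ zero coordinates. It suffices to show $N(V,k) \leq \binom{r}{k}(q-1)^{r-k}$ whenever $r \geq \bar{R}_k(q)$, since for any candidate matrix $M$ one has $|M| \leq N(\mathrm{col}(M),k)$. A preliminary cleanup removes coordinates on which $V$ is identically zero: each such removal decreases both $n$ and $k$ by one while preserving the rank, so by induction on $k$ we may assume every coordinate projection $L_i\colon V \to \F_q$ is a nonzero linear functional. We then view the $L_i$ as nonzero vectors in $V^* \cong \F_q^r$ spanning $V^*$.

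First I would prove the base case $N(V,0) \leq (q-1)^r$, jointly with its affine analogue, by induction on $n$: either the last coordinate is forced zero (so $N=0$), or slice by its $q-1$ nonzero values into $(r-1)$-dimensional affine fibres, each contributing at most $(q-1)^{r-1}$. Then partition vectors by their zero set $Z(v)$:
\[
N(V,k) = \sum_{|S|=k} f(S), \qquad f(S) := \#\{v \in V : Z(v) = S\}.
\]
Applying the base case to $V_S := V \cap \bigcap_{i \in S} \ker L_i$, regarded as a subspace of $\F_q^{n - |S|}$ by projecting away the coordinates in $S$, gives $f(S) \leq (q-1)^{\dim V_S} = (q-1)^{r - \mathrm{rank}(S)}$, where $\mathrm{rank}(S) := \dim \mathrm{span}\{L_i : i \in S\}$. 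Moreover $f(S) > 0$ forces $S$ to be a flat of the linear matroid $\mathcal{M}$ of the $L_i$: otherwise some $L_j$ with $j \notin S$ lies in $\mathrm{span}\{L_i : i \in S\}$, forcing $L_j(v) = 0$ on $V_S$ and contradicting $Z(v) = S$.

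The main obstacle is the passage from the refined inequality
\[
N(V,k) \leq \sum_{\substack{|S|=k \\ S \text{ a flat of } \mathcal{M}}} (q-1)^{r - \mathrm{rank}(S)}
\]
to the sharp bound $\binom{r}{k}(q-1)^{r-k}$. In the extremal configuration $n = r$ with the $L_i$ a basis, only rank-$k$ flats contribute; there are exactly $\binom{r}{k}$ of them, each with weight $(q-1)^{r-k}$. When $n > r$, a rank-$j$ flat with $j < k$ carries a term that is a factor $(q-1)^{k-j}$ too large, so the delicate point is to bound how many such low-rank flats can arise. The plan is to invoke Theorem \ref{t.mainspecial} applied to $\{L_1, \ldots, L_n\}$ viewed as columns of a rank-$r$ matrix over $\F_q$, controlling the number of $L_i$ contained in any fixed subspace $W \subseteq V^*$ of dimension $j$ and hence the number of rank-$j$ flats. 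The hypothesis $r \geq \bar{R}_k(q)$ ensures the decay in rank is fast enough for the rank-$k$ term to dominate. Tracing equality throughout forces $\mathcal{M}$ to be the free matroid on $r$ elements, so $n = r$ and the $L_i$ form a basis; this identifies $V$ with $\F_q^r$ up to coordinate change, recovering the uniqueness promised in the abstract.
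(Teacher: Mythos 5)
Your setup---passing to the column space $V$, stripping identically-zero coordinates to reduce $k$, the base case $N(V,0)\le (q-1)^r$, and the bound $f(S)\le (q-1)^{r-\rank(S)}$ with $S$ necessarily a flat---is correct and runs parallel to the paper's own reduction (trivial rows, the parameter $\kappa$, Corollary \ref{c.kzero}). But the step you flag as ``the main obstacle'' is a genuine gap, and it is worse than you describe: the refined inequality
\[
N(V,k)\;\le\;\sum_{\substack{|S|=k\\ S\text{ a flat}}}(q-1)^{r-\rank(S)}
\]
is strictly weaker than the theorem, and no control on the number of \emph{low-rank} flats can repair it, because already the \emph{full-rank} flats can number more than $\binom{r}{k}$ once $n>r$. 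Concretely, take $n=r+1$, let $L_1,\dots,L_r$ be a basis of $V^*$ and $L_{r+1}=L_1+\dots+L_r$, and take $k=1$. Every singleton $\{L_i\}$ is a rank-$1$ flat with $f(\{L_i\})>0$, so your right-hand side is $(r+1)(q-1)^{r-1}$, exceeding the target $r(q-1)^{r-1}$ for \emph{every} $r$; hence no choice of $\bar R_k(q)$ closes this chain of inequalities. (The theorem is of course still true here: the extra functional $L_{r+1}$ forces about a $1/q$ fraction of each fibre to acquire an additional zero, but a termwise bound on each $f(S)$ cannot see that loss.) Relatedly, Theorem \ref{t.mainspecial} does not control the number of $L_i$ lying in a $j$-dimensional subspace of $V^*$: the $L_i$ are arbitrary nonzero vectors with no weight restriction, so that theorem does not apply to them, and in any case the problematic example above has all $L_i$ in ``general position.''

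What is missing is a \emph{global} argument quantifying the deficit created by any functional beyond a fixed basis, and this is exactly where the paper spends its effort. It proceeds in two steps: Lemma \ref{l.spacecount} shows that if, after mapping a basis among the $\v_j$ to the standard basis, some hyperplane $T(V_j)$ is not a coordinate hyperplane, then a constant fraction (roughly $1/q$) of the whole layer $X^{=r-\kappa}$ is excluded from $T(Y)$, which already beats the target bound for $r\ge 3q^2\kappa$; and if every $T(V_j)$ is a coordinate hyperplane (possibly with multiplicity), the condition ``exactly $\kappa$ zeros'' makes $T(Y)$ an antichain in the zero-set poset, and an LYM-type inequality gives $|Y|\le\binom{r}{\kappa}(q-1)^{r-\kappa}$ exactly, together with the uniqueness statement. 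You would need to supply an argument of at least this strength---one that converts ``extra'' functionals into a quantified loss of vectors---before your flat decomposition can yield the theorem.
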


Furthermore, in the context of both Theorem \ref{t.mainspecial} and Theorem \ref{t.co}, we will show that the only examples attaining the equality have exactly $r$ nonzero rows (unless $k=0$, see Corollary \ref{c.kzero}). This corresponds to the ``uniqueness of the cliques'' in Theorem 2.8 of \cite{briggs2017inverting}, where additional isolated vertices correspond to additional rows of all 0's here.

In fact, a result akin to Theorem \ref{t.mainspecial} holds in a far more general setting. Suppose $\F$ is an arbitrary field (not necessarily finite).
Let $\L = (L_1, \dots, L_s)$ be a collection of disjoint finite sets $L_i \subset \F^*$ of nonzero labels.
 Then, for each $s$-tuple $\k=(k_1, \dots, k_s)$ of positive integers, an ``\emph{$\Lk$-vector}'' is defined to be one with exactly $k_i$ entries in $L_i$ for each $i$, and the rest equal to 0. Thus, a binary vector of weight $w$ is an $\lk$-vector for $\L=(\{1\})$ and $\k=(w)$.
 
The corresponding question in this setting is thus:
\begin{question}
What is the maximum number of distinct columns $M$ can have, if each column is an
 $\Lk$-vector, 
and $M$ has rank $\leq r$?
\end{question}
We denote this value by $\exmf(r,\k)$.

We will prove the following theorem in Section \ref{s.weight}:
 
\begin{theorem}
  \label{t.main}
%
%


Suppose $s=1$, so that $\k=(k)$ and $\L=(L)$. Then there is an $R_k$ such that for all $r \geq R_k$:
\begin{equation}\label{eq.onelist}
\ex_{\F, L} (r, k) =   \begin{cases}
    \binom{r+1}{k} & L = \{\ell\} \text{ and } \ell \cdot 1 =0 \text{ in } \F, \\
    |L|^k \binom{r}{k}& \text{ otherwise.}
  \end{cases}
\end{equation}

Alternatively, suppose each $L_i$ is a single element $\{\ell_i\}$ for every $i$. Then, once $r \geq R_\k$,
\begin{equation}\label{eq.singletonlists}
\ex_{\F,\L} (r,\k) =   \begin{cases}
    \binom{r+1}{\k} & \sum \ell_i k_i =0, \\
    \L^\k \binom{r}{\k}& \text{ otherwise,}
  \end{cases}
\end{equation}
where by $\binom{r}{\k}$ we mean the multinomial coefficient $\binom{r}{k_1,\dots, k_s, r-\sum_i k_i}$, and by $\L^\k$ we mean the product $ \prod\limits_{i \in [s]} |L_i|^{k_i}$.

Moreover, in both cases, any extremal matrix $M$ has only $r+1$ or $r$ nonzero rows respectively.
\end{theorem}

Here, $R_k, R_{\k}$ respectively depend on $|L|$ and $\L^\k$, \emph{but not on the field $\F$}.

We remark there is nonempty (albeit rather small) overlap between Theorem \ref{t.main} and the main theorem of Ahlswede, Aydinian and Khachatrian \cite{ahlswede2003maximum}. They considered this question in the case $\F=\R, s=1$,
and $\L=(\{1\})$, i.e. binary vectors \emph{over the reals} of weight $k$, but managed to solve this for \emph{every} $r$. In particular, the equalities given in \eqref{t.main} were shown to break down precisely once $r < 2k$. As with their question for $q=2$, this leads us to ask how small $R_{k,q}$ can be made in general---we will discuss this a little more in Section \ref{s.conc}. But, for binary vectors over any field, the bound on $R_{k,q}$ we obtain remains the same.

We close the introduction by noting Theorem \ref{t.mainspecial} follows from the first part of Theorem \ref{t.main}. Indeed, let $L:=\F_q^\times = \F_q\backslash \{0\}$, a single list consisting of all nonzero elements of $\F_q$.
Here, $L=\{\ell\} $ if and only if $q=2$ and $\ell=1$, so the clause ``$\ell \cdot 1 =0$'' says precisely that $k$ is even.



\section{Preliminaries, Notation}

%
%
%
%

We first obtain nontrivial bounds for all 3 questions, by generalizing the setup further still.

For an arbitrary set $S\subset \mathbb{Z}_{\geq 0}^{s}$ of possible weight vectors, we
say a column vector is an ``\emph{$\ls$-vector}'' whenever it is an $\lk$-vector for some $\k \in S$, and denote by $\exmf(r,S)$ the maximum size of a collection of $\ls$-vectors whose rank is $\leq r$. We can define $\exq(r,T), \bexq(r,T)$ correspondingly when $T$ is just a subset of nonnegative integers, and specifically write $\exq(r, \leq k), \bexq(r, \leq k)$ as shorthand for $\exq(r, \{0,1,\dots, k\}), \bexq(r, \{0,1,\dots, k\})$ respectively.

We can form a poset structure $\preceq$ on the set of weight vectors $\Z_{\geq 0}^s$ of length $s$ by saying $\k' \preceq \k$ if and only if $k_i' \leq k_i$ in every coordinate $i$. Say that $S \subset \Z_{\geq 0}^s$ is a \emph{down-set} if $\k' \in S$ whenever $\k \in S$ and $\k' \preceq \k$.

\begin{lemma}\label{l.idealbd} For \emph{any} rank $r$, field $\F_q$ and weight $k$:
\[
\bexq(r,\leq k)
= \sum_{i \leq k} \binom{r}{i} (q-1)^{r-i}.
\]
Also, for any field $\F$, down-set $S \subset \Z_{\geq 0}^s$, weight vector $\k$ and list vector $\L$,
\[
\exmf(r, S ) 
=  \sum_{\k' \in S} \binom{r}{\k'} \L^{\k'}, 
\textrm{ and  hence for any }\F_q,
\exq(r, \leq k)
= \sum_{i \leq k} \binom{r}{i} (q-1)^i 
.\]
\end{lemma}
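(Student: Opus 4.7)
The lower bound is clear from the obvious construction: the $r$-row matrix whose columns enumerate every distinct $(\L,\k')$-vector in $\F^r$ with $\k'\in S$ (respectively, every vector in $\F_q^r$ with at most $k$ zeros) has rank at most $r$, and a direct multinomial calculation --- choose the $k'_i$ coordinates that receive $L_i$-entries, then assign labels within each $L_i$ --- yields the stated column counts. The $\exq(r,\leq k)$ identity is just the specialization of the $\exmf(r,S)$ identity to $s=1$, $L_1=\F_q^\times$, $S=\{0,1,\dots,k\}$, and requires no further argument.

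For the matching upper bound, take any matrix $M$ of rank at most $r$ with $n$ rows whose columns lie in the prescribed class, and let $V \subseteq \F^n$ be its column space. We may assume $\dim V = r$ (otherwise the claim for rank $\dim V$ is already stronger). Because $V$ has rank $r$, there is a coordinate set $I \subseteq [n]$ with $|I|=r$ on which the restriction $\pi_I : V \to \F^I \cong \F^r$ is a linear bijection; equivalently, some $r$ rows of an $n \times r$ spanning matrix for $V$ are linearly independent. The crux is that $\pi_I$ respects every prescribed class in the direction we need: in the labeled case, if $v \in V$ is an $(\L,\k')$-vector then $\pi_I(v)$ has some weight vector $\k'' \preceq \k'$ (restricting to a coordinate subset cannot add entries to any $L_i$), and $\k''$ lies in the down-set $S$ whenever $\k'$ does; in the co-weight case, a vector with at most $k$ zeros among $n$ coordinates trivially has at most $k$ zeros among the $r$ coordinates in $I$. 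Since $\pi_I$ is injective on $V$, each prescribed class of columns of $M$ injects into the same class inside $\F^r$, which has exactly the claimed cardinality.

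The only substantive idea is this projection-plus-down-set observation --- the rest is multinomial bookkeeping --- and it unifies all three assertions. The potential conceptual hiccup is that ``at most $k$ zeros'' is not itself a down-set property in the labeled sense, but the projection argument still works because zeros among the $I$-coordinates are a subset of all zeros; past this point no further obstacle arises.
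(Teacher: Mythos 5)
Your proof is correct and follows essentially the same route as the paper: exhibit the all-vectors-in-$\F^r$ construction for the lower bound, then for the upper bound choose $r$ rows on which the column space projects isomorphically and observe that projection can only decrease the count of $L_i$-entries (respectively, of zeros), so the injected image stays in the down-set class inside $\F^r$. Nothing is missing.
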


\begin{proof}

It suffices to show ``$\leq$'', since the corresponding lower bounds are all immediate from considering matrices with precisely $r$ rows (with all columns of weight $\leq k$ in the first case, or all $(\L,S)$-vectors of length $r$ in the second).

For the second bound, given a matrix $M$ of rank $r$, let $\cC$ be its columns and $W=\langle \cC \rangle$ be its column space. Since the row rank of $M$ is also $r$, there exists a subset $I$ of $r$ of its rows
such that the projection $W \rightarrow W|_I$ is an isomorphism. In particular, it is injective, and restricts to an injection on the original vectors $\pi: \cC \hookrightarrow \cC|_I$.
For any $ x \in \cC$ and $i \in [s]$, the number of $L_i$-entries in $\pi(x)$ is $\leq$ that of $x$. Hence, if $x$ is an $(\L,\k)$-vector, then $\pi(x)$ is an $(\L,\k')$-vector for some $\k' \preceq \k$, and hence an $(\L,S)$-vector as $S$ is a down-set. The desired bound is then obtained by counting all $(\L,S)$-vectors in $\cC|_I \simeq \F^r$.

The proof of the bound on $\bex$ is identical, with ``zero-entries'' and ``vectors with $k'$ zeros'' in place of ``$L_i$-entries'' and ``$(\L,\k')$-vectors'' respectively.
\end{proof}

\begin{corollary}\label{c.idealbd}
For any $q,\F,r,k,\k$ and $\L$,
\[
\bexq(r,k)  \leq \sum_{i \leq k} \binom{r}{i} (q-1)^{r-i},
\]
\[
\exmf(r,\k) \leq \sum_{\k' \preceq \k} \binom{r}{\k'} \L^{\k'},
\textrm{ and }
\exq(r,k) \leq \sum_{i \leq k} \binom{r}{i} (q-1)^i. 
\]

\end{corollary}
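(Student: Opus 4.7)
The plan is to derive all three bounds as essentially immediate consequences of Lemma \ref{l.idealbd}, via a simple monotonicity argument: pass from counting columns of an ``exact'' weight specification to counting columns whose weight specification lies in a suitable down-set, for which the lemma gives an exact formula.

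First, for the weight bound $\exq(r,k)$, I would observe that any collection $\cC$ of columns of exact weight $k$ is trivially also a collection of columns of weight $\leq k$. Hence $\exq(r,k) \leq \exq(r,\leq k)$, and the third claimed bound follows directly by invoking the corresponding clause of Lemma \ref{l.idealbd}. The identical argument---with ``nonzero entries'' replaced everywhere by ``zero entries''---establishes $\bexq(r,k) \leq \bexq(r,\leq k)$ and hence the first claimed bound, again by Lemma \ref{l.idealbd}.

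For the general version $\exmf(r,\k)$, I would take $S = \{\k' \in \Z_{\geq 0}^s : \k' \preceq \k\}$, which is a down-set in the coordinatewise order (if $\k'' \preceq \k' \preceq \k$ then $\k'' \preceq \k$, so $\k'' \in S$). Since $\k \in S$, every $(\L,\k)$-vector is also an $(\L,S)$-vector, so $\exmf(r,\k) \leq \exmf(r,S)$. Lemma \ref{l.idealbd} then yields $\exmf(r,S) = \sum_{\k' \in S} \binom{r}{\k'} \L^{\k'} = \sum_{\k' \preceq \k} \binom{r}{\k'} \L^{\k'}$, which is the middle bound. There is no genuine obstacle here---this corollary is really just a packaging of the lemma with the elementary observation that fixing an exact weight profile is a more restrictive constraint than requiring the weight profile to lie below $\k$ (resp.\ be at most $k$), so the associated extremal numbers can only decrease.
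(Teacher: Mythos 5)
Your argument is correct and is exactly the intended derivation: the paper states this corollary without a separate proof, treating it as immediate from Lemma \ref{l.idealbd} via the observation that the principal down-set $\{\k' : \k' \preceq \k\}$ (resp.\ $\{0,1,\dots,k\}$) contains $\k$ (resp.\ $k$), so each exact-weight extremal number is bounded by the corresponding down-set extremal number. Nothing is missing.
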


In particular, we obtain the $k=0$ case of Theorem \ref{t.co}:

\begin{corollary}\label{c.kzero}
$\bexq(r,0) = (q-1)^r$. Furthermore, any matrix $M$ with no zeros, of rank $r$, with $(q-1)^r$ distinct columns, has $r$ rows $\bu_1, \dots, \bu_r$ such that every row is a scalar multiple of some $\bu_i$.
\end{corollary}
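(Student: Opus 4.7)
The upper bound $\bexq(r,0) \le (q-1)^r$ is immediate from Corollary \ref{c.idealbd} (taking $k=0$), so my focus is the structural statement. I would revisit the proof of Lemma \ref{l.idealbd}: restriction to a full-rank set $I$ of $r$ rows yields an injection $\pi: \cC \hookrightarrow \cC|_I$, and the ``no zeros'' hypothesis forces $\pi(\cC) \subseteq (\F_q^*)^r$. Hence equality $|\cC|=(q-1)^r$ upgrades $\pi$ to a bijection onto $(\F_q^*)^r$, which is the structural fact I will exploit.

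Let $\bu_1, \ldots, \bu_r$ denote the rows of $M$ indexed by $I$. Since $M|_I$ has rank $r$, these rows form a basis of the row space of $M$, so any other row $\v$ of $M$ admits a unique expansion $\v = \sum_i c_i \bu_i$. Its entry in column $\bx$ equals $\sum_i c_i x_i$, where $(x_1,\dots,x_r) := \bx|_I$ ranges over all of $(\F_q^*)^r$ by the bijection above. So the hypothesis that $\v$ has no zero entry translates to the linear condition
\[
\sum_i c_i x_i \neq 0 \quad \text{for every } (x_1,\dots,x_r) \in (\F_q^*)^r.
\]

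The heart of the argument is to show this condition forces at most one $c_i$ to be nonzero. The case $q=2$ is trivial: $(q-1)^r = 1$ forces $r = 1$, and the conclusion is immediate. For $q \ge 3$, suppose for contradiction that $c_{i_1}, c_{i_2}$ are both nonzero. Fix the remaining $x_j \in \F_q^*$ arbitrarily and let $D = -\sum_{j \neq i_1, i_2} c_j x_j$. The affine line $c_{i_1}x_{i_1} + c_{i_2}x_{i_2} = D$ has $q$ points in $\F_q^2$, at most $2$ of which meet a coordinate axis, so at least $q-2 \geq 1$ solutions $(x_{i_1},x_{i_2})$ lie in $(\F_q^*)^2$ and witness $\sum_i c_i x_i = 0$, a contradiction. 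Since $\v \neq 0$, exactly one $c_i$ is nonzero and $\v$ is a scalar multiple of the corresponding $\bu_i$.

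I do not anticipate a serious obstacle: the only conceptual step is recognizing that equality in Corollary \ref{c.idealbd} promotes the injection $\pi$ to a bijection, and the remainder is a short counting argument on an affine line in $\F_q^2$. A small bookkeeping point is the separate treatment of $q=2$, which is forced on us since the counting step really does need $q \geq 3$.
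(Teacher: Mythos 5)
Your proposal is correct and follows essentially the same route as the paper: equality in Corollary \ref{c.idealbd} upgrades the restriction map to a bijection onto $(\F_q^\times)^r$, any remaining row is expanded in the basis of rows indexed by $I$, and a two-coordinate counting argument (needing $q\geq 3$) shows a linear form with two nonzero coefficients must vanish somewhere on $(\F_q^\times)^r$. Your affine-line count is a slightly cleaner packaging of the paper's pigeonhole-and-subtract step, and your explicit handling of $q=2$ is a harmless addition.
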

\begin{proof}
Taking $k=0$ in the 3rd equality of Corollary \ref{c.idealbd} establishes $\bexq(r,0) \leq (q-1)^r$. 

For any matrix $M$ attaining this equality, in the above proof, we see that $\cC|_I$ consists of \emph{all} column vectors with no zeros, that is, $\cC|_I \simeq (\F_q^\times)^r$.
Letting $\bu_1, \dots, \bu_r$ denote the rows of $M$ given by $I$, this says that for every $\v \in (\F_q^\times)^r$ there is some $j$ such that $\v=\left(\begin{array}{c} u_{1,j} \\ \vdots \\ u_{r,j}\end{array} \right)$.
Now, suppose there is another row $u$ of $M$. Since $\rank(M)=r = \dim(\langle \cC|_I \rangle)$ ,
the $\{\bu_i\}$ form a basis for the row space of $M$, so $\bu = \sum \l_i \bu_i$ for some scalars $\l_i \in \F_q$.
 As $M$ has no zeros, $0 \neq \sum \l_i u_{i,j} = \langle \bx, \v \rangle$
 for \emph{every} $j$, writing $\bx:= (\l_1, \dots, \l_r)$.



Now, since $\bu \neq 0$, $\bx \neq 0$ so some $\l_j \neq 0$.
Now take any $j' \in [r] \backslash \{j\}$.
Consider the $q^2$ vectors of the form $\v(\alpha, \beta):=(1,\dots, 1, \alpha, 1,\dots, 1, \beta, 1, \dots, 1)\in \F_q^r$ with $\alpha, \beta$ in positions $j,j'$ respectively.
For each $\alpha \neq 0$, we know $\langle \bx,\v(1,\alpha) \rangle \in \F_q^\times$, and they are distinct since $\l_{j} \neq 0$. It follows $\langle \bx, \v(1,0) \rangle = 0$. As $q \geq 3$, we similarly find another $\beta \in \F_q^\times \backslash \{1\}$, also with $\langle \bx, \v(\beta,0) \rangle = 0$  by the same logic. Subtracting these gives $0=\langle \bx, (1-\beta)\e_{j'} \rangle = (1-\beta)\l_{j'}$, hence $\l_{j'}=0$.

Since $j' \neq j$ was arbitrary, $\bu = \l_j \bu_j$, which is what we were trying to prove.
\end{proof}

\begin{remark}
The final part of the argument showed that any vector  over $\F_q$ ($q \geq 3$) of weight $\geq 2$ is orthogonal to a nonzero number of vectors with no zeros. Later, 
Lemma \ref{l.spacecount} will count this number explicitly.
\end{remark}

\section{Weight-$k$ Proofs}\label{s.weight}

Our proofs will first establish an affine variant of Theorem \ref{t.main} for technical reasons.
To state it, define the \emph{$a$-rank}, or \emph{affine rank}, of a set of vectors to be the smallest $r$ so that any subset of $r+1$ vectors yield an \emph{$a$-dependence}, where by an $a$-dependence we mean a nontrivial linear dependence whose coefficients sum to 0 in $\F$. 

\begin{notation}
We denote by $\exem(r,\k)$ the maximum size of a collection of $\lk$  vectors of $a$-rank $\leq r$. (As $\F$ will always be fixed, we drop the dependence in this notation.)  
\end{notation}

 Notice that, in general, the $a$-rank of a collection is at least the rank of the collection.  On the other hand, the $a$-rank of the columns of a matrix $M$ is the same as the rank of the matrix $M$ with an additional row of all 1's added. 
Thus, we have 
\begin{equation}
\label{sandwhich}
\rank(M)\leq \prank(M)
=\rank \left( \begin{array}{c}
 M  \\
1  \cdots  1
\end{array} \right) \leq \rank(M)+1.
\end{equation}
Moreover,
\begin{remark}\label{rem.arankrank}
Suppose every $L_i = \{\ell_i \}$ has only one element, and that $\sum \ell_i k_i \neq 0$. Then $\exem(r,\k) = \exm(r,\k)$.
Indeed, this time $(1,\dots, 1) \in \text{rowspan}(M)$ for any matrix $M$ whose columns are $(\L,\k)$-vectors, and so its $a$-rank and rank coincide.
\end{remark}


In a similar spirit, we will also make frequent use of the following standard lemma:
\begin{lemma}\label{l.arank}
Let $\l, \m \in \F$ be distinct. 
Then
\[
\prank\left(
\begin{array}{cc}
	\l \cdots \l & \m \\
	Y & \v  
\end{array}
\right) = \prank \binom{\l \cdots \l}{Y} +1
,\]
and hence equals $\prank(Y)+1$ (provided $\l \neq 0$), for any vector $\v$ and matrix $Y$ over $\F$ with the same number of rows.
\end{lemma}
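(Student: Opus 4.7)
The plan is to reduce everything to ordinary rank using the formula $\prank(M) = \rank\binom{M}{\mathbf{1}}$ from equation \eqref{sandwhich}, after which both equalities become simple column-/row-reduction arguments.

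First I would rewrite the two equalities in ordinary-rank form. Setting
\[
B = \begin{pmatrix} \l \cdots \l & \m \\ Y & \v \\ 1 \cdots 1 & 1 \end{pmatrix}, \qquad A = \begin{pmatrix} \l \cdots \l \\ Y \\ 1 \cdots 1 \end{pmatrix},
\]
applying \eqref{sandwhich} turns the first claimed equality into $\rank B = \rank A + 1$, and the second into $\rank A = \prank(Y)$.

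For the first identity, I would show that the additional column $\mathbf{c} = \bigl(\m,\v,1\bigr)^{\!\top}$ does not lie in the column span of $A$. Every column of $A$ has top entry $\l$ and bottom entry $1$, so an arbitrary $\F$-combination $A\mathbf{x}$ has top entry $\l s$ and bottom entry $s$, where $s = \sum x_i$. Matching the bottom coordinate of $\mathbf{c}$ forces $s = 1$, and then the top coordinate would be $\l$, contradicting $\l \neq \m$. Hence adjoining $\mathbf{c}$ strictly increases the rank, giving $\rank B = \rank A + 1$ as desired. This is the only step with any content in the argument, but it is just the observation that the last coordinate of the all-ones row pins down the coefficient sum.

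For the second identity, under the hypothesis $\l \neq 0$ the top row $(\l,\dots,\l)$ of $A$ equals $\l$ times the bottom row $(1,\dots,1)$. Deleting the top row therefore preserves rank, yielding
\[
\rank A \;=\; \rank \binom{Y}{1 \cdots 1} \;=\; \prank(Y),
\]
and combining with the first part gives $\prank\!\begin{pmatrix}\l\cdots\l & \m \\ Y & \v\end{pmatrix} = \prank(Y) + 1$. I do not foresee a real obstacle here; the only thing to double-check is that no hypothesis beyond $\l \neq \m$ (respectively $\l \neq 0$) is implicitly needed, which the argument above confirms since $\mathbf{x}$ and $\v$ play no role outside of ensuring the appropriate row/column counts match.
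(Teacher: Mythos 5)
Your proof is correct. It takes a somewhat different route from the paper's: the paper argues directly from the definition of $a$-rank, taking $r$ columns $\v_1,\dots,\v_r$ of $Y$ (where $r$ is the $a$-rank of the larger matrix), extracting an $a$-dependence among $\binom{\l}{\v_1},\dots,\binom{\l}{\v_r},\binom{\m}{\v}$, and using the zero-sum condition on the coefficients together with $\l\neq\m$ to force the coefficient of $\binom{\m}{\v}$ to vanish; it proves only the inequality ``$\geq$'', the reverse being immediate since adjoining a single column raises the $a$-rank by at most one. You instead pass to ordinary rank via the identity $\prank(M)=\rank\binom{M}{1\cdots 1}$ of \eqref{sandwhich} and show that the appended column $(\m,\v,1)^{\top}$ lies outside the column span of the augmented matrix. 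These are dual formulations of the same observation --- the all-ones row (equivalently, the requirement that dependence coefficients sum to $0$) pins down the coefficient of the new column --- so neither is substantially stronger; your version has the minor advantages of establishing both inequalities in one stroke and of explicitly verifying the final clause that the quantity equals $\prank(Y)+1$ when $\l\neq 0$ (via the proportionality of the $\l\cdots\l$ row and the all-ones row), a step the paper's proof leaves unremarked.
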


\begin{proof}
It suffices to show ``$\geq$'', the other direction being trivial.\\
Let $r=\arank\left(
\begin{array}{cc}
	\l \cdots \l & \m \\
	Y & \v  
\end{array}
\right)$ and take any $r$ column vectors $\v_1, \dots, \v_r$ of $Y$. By definition of $r$, there is an $a$-dependence among
 $\binom{\l}{\v_1}, \dots, \binom{\l}{\v_r},\binom{\m}{\v}$.
  Since the coefficients sum to 0, and $\l \neq \mu,$ it follows the coefficient of $\binom{\mu}{\v}$ is 0, so in fact we have an $a$-dependence among $\binom{\l}{\v_1}, \dots, \binom{\l}{\v_r}$.
   Thus, $\arank \binom{\l \cdots \l}{Y} \leq r-1$, as desired.
\end{proof}

\begin{theorem}\label{t.even}
Suppose that either every $L_i$ in the list $\L$ has size 1, or that $s=1$ (so $\L=(L_1)$ and $\k =(k_1)$).
Then there is a $Q_\k$ such that for all $r\geq Q_\k$,
  \[
\exem(r,\k)= 
 \begin{cases}
   	\binom{r-1}{k_1} |L_1|^{k_1}  & s=1, |L_1|>1; \\
  	\binom{r}{\k} &
  	\text{ every } |L_i|=1.
 \end{cases}
\]
Moreover, any extremal collection must consist of vectors which are zero except in $r-1$ common positions (respectively, $r$ common positions).
\end{theorem}

\begin{proof}

For the lower bound, we simply take ``all vectors of the maximum possible length''-but we must be cautious whether the maximum possible length is $r$ or $r-1$.
First suppose $L_i = \{ \ell_i \}$ for each $i$.
Let $M$ be the matrix whose columns are all $\binom{r}{\k}$ $\lk$-vectors of length $r$.
Then $\rank(M)=r-1$ if $\sum \ell_i k_i=0$, and $r$ otherwise,
but in both instances $\arank(M)=r$ (see \eqref{sandwhich} and  Remark \ref{rem.arankrank}, respectively).

Meanwhile, if
$\L=(L_1)$ and $|L_1|\geq 2$,
then the matrix of all $|L_1|^{k_1} \binom{r-1}{k_1}$ $\lk$-vectors of length $r-1$ has rank $r-1$, and again by \eqref{sandwhich} has $a$-rank $\leq r$.

Write $\exem^*(r,\k)$ for $\exem(r+ \mathbf{1}_{\{ \exists i: |L_i| > 1 \}},k)$.
For the upper bound, we will prove $\exem^*(r,\k) \leq \L^\k \binom{r}{\k}$ for $r \geq Q_\k$.

To begin, we will show that for any nonzero $\k$
and for all $r \geq \Vert\k\Vert+2$,
  \begin{equation}\label{evenin}
    \exem^*(r,\k)\leq  \exem^*(r-1,\k) + \sum_{i \in [s]} |L_i|\cdot \exem^*(r-1,\k-\e_i).
  \end{equation}
  where $\e_i$ denotes the $i$th unit vector, so that $\k-\e_i=(k_1, \dots, k_i-1, \dots, k_s).$
  To see this, we first show \eqref{evenin} without the stars. Consider any matrix $M$  of $a$-rank $\leq r$ whose columns are all $\lk$-vectors. 
If all nonzero rows of $M$ had all entries in $\bigcup L_i$, then $M$ has only $\Vert\k\Vert$ nonzero rows and in particular $\leq \L^\k$ columns, independently of $r$. Plus, having already established the lower bound in the theorem, we know $\L^\k \leq \binom{r-2}{\k} \L^\k \leq \exem(r-1,\k)$, only needing $r \geq 2 + \Vert\k\Vert$. So WLOG, assume that the first row of $M$ contains both a 0 and an $\ell \in \bigcup L_i$.

Now let $A_\ell$ be the set of vectors with $\ell$ in row 1, for each $\ell \in \{0\} \cup \bigcup L_i$. Both $\bigcup_{\ell \neq 0} A_\ell$ and $A_0$ are nonempty by assumption.  Define $A'_\ell$ to be the collection of vectors produced by removing the first coordinate from each vector in $A_\ell$. 
%
By Lemma \ref{l.arank}, $A_\ell'$ has $a$-rank $\leq r-1$ for every $\ell \in \{ 0 \} \cup \bigcup L_i$. Hence $|A_\ell'| \leq \exem(r-1,\k-\e_i)$ whenever $\ell \in L_i$ while $|A_0'| \leq \exem(r-1,\k).$ This establishes \eqref{evenin} for $\exem(r,\k)$. To add the stars, note that the shift in $r$ of 1 in $\exem^*$ occurs for \emph{all} terms in the case $s=1, |L_1|>1$ and for \emph{no} terms if $|L_i|=1$ for every $i$.


The inequality \eqref{evenin}  would suffice to prove Theorem \ref{t.even} if we could establish a family of base cases for each nonzero $k$ for the induction, since clearly $\exem^*(r,\mathbf{0})=1$ for every $r$ and $\L$. We do not know how to do this directly, however.  Instead we define
\[
\alpha^\k_r=\exem^*(r,\k)-\L^\k\binom{r}{\k}
\]
and consider the sequence $\{\alpha_r^\k\}_{r\in \N}$. Now, \eqref{evenin} gives that for $r \geq \Vert\k\Vert+2$, $\alpha_r^\k\leq \alpha_{r-1}^\k + \sum\limits_{i \in [s]} |L_i| \cdot \alpha_{r-1}^{\k-\e_i}$.  By induction on $\Vert\k\Vert$, we then have for $Q_\k':= \max \{ \Vert\k\Vert+2\} \cup \{Q_{\k-\e_i}:i \in [s]\}$ that 
\[\label{monotone}
r-1\geq Q_\k'\implies\alpha_{r}^\k \leq \alpha_{r-1}^\k.
\]
Observe that to prove Theorem \ref{t.even}, it suffices to show that for $r-1\geq Q_\k'$,

\begin{claim}\label{c.strict}
\begin{equation}
\alpha_{r}^\k=\alpha_{r-1}^\k\implies (\mbox{$\alpha_r^\k=0$ and any collection realizing $\exem^*(r,\k)$ must have support $r$}).
\end{equation}
\end{claim}

To prove Claim \eqref{c.strict}, let us suppose that $r,\k$ are such that $\alpha_r^\k=\alpha_{r-1}^\k$, and $r-1\geq Q_\k'$, so that 
\begin{equation}
\label{equality}
\exem^*(r,\k)
=\exem^*(r-1,\k) + \sum_{i \in [s]}|L_i|^{k_i}\binom {r-1} {\k-\e_i}.
\end{equation}
Recall that in the decomposition above, $A_0$ has size at most $\exem^*(r-1,\k)$.  Thus for an extremal collection for $r,\k$ where \eqref{equality} holds, we have that 
\begin{equation}
\label{eq.nonzerosubmxs}
\sum_{\ell \neq 0} |A_\ell|
\geq
\sum |L_i|^{k_i} \binom{r-1}{\k-\e_i}.
\end{equation}
 Moreover, by induction on $\Vert\k\Vert$, we have that the unique candidate for $A_\ell'$ of size $\L^{\k-\e_i}\binom{r-1}{\k-\e_i}$ is a collection of vectors whose support has size $r-1$, for every $\ell \in L_i$.
In fact, they are all $\L^{\k-\e_i}\binom{r-1}{\k-\e_i}$ such possible vectors.  The above inequality \eqref{eq.nonzerosubmxs} is therefore an equality, already establishing $\alpha_r^{\k} = 0$.

We next see these supports must be identical for every $\ell \in \cup L_i$. For otherwise, for some distinct $\ell,j \in \cup L_i$ we have $\supp(A_\ell') \backslash \supp(A_j') \neq \emptyset$, WLOG containing $2$. That is, some $u' \in A_j'$ has $u'_2 \in \cup L_i$ while 2 is outside the support of $A_j'$. Write $A_j''$ for $A_j'$ upon deletion of this topmost row of all zeros.
Then take
some other $v' \in A_\ell'$ with $v'_2 =0$ (using the structure of $A_\ell'$ and that $r > \Vert \k \Vert$).
Now expand along rows 1 and 2 in Lemma \ref{l.arank} in turn, writing $v'',u''$ for the vectors obtained from $v',u'$ respectively by removing the first entry:
\begin{align*}
\arank(A_j')
 =\arank(A_j'') 
 \leq \arank\left( \begin{array}{cc}
	j \cdots j & \ell \\
	A_j'' & v''
	 \end{array} \right)-1
& \leq \arank\left( \begin{array}{ccc}
	j \cdots j & \ell & \ell \\
	0 \cdots 0 & 0 & u_2 \\
	A_j'' & v''
	& u'' \\
	 \end{array} \right)-2 \\
&  \leq r-2,
\end{align*}
 a contradiction.

Finally, we check the support of $A_0$ must be contained in the support of $A_j'$ (now equivalent for any $j \in \cup L_i$), establishing Claim \eqref{c.strict}, and thus also Theorem \ref{t.even}.  Indeed, suppose $A_0$ contains a vector $u$ such that $u(t) \in \cup L_i$, where $t$ is outside the support of $A_j'$. This time we consider three cases:\\
\textbf{Case 1: All vectors $v\in A_0$ satisfy $v(t) \in \cup L_i$, but $A_0$ is nonconstant on row $t$.}  Decomposing $A_0$ according to $t$-th entries, and applying Lemma \ref{l.arank} to each part we see $|A_0|\leq \sum |L_i| \cdot \exem^*(r-1,\k-\e_i)$. So by induction on $k$, $|A_0|\leq \sum |L_i|\cdot \L^{\k-\e_i} \binom{r-1}{\k-\e_i}=O(r^{\Vert\k\Vert -1})<\exem^*(r-1,\k)$ for $r$ large enough, but this contradicts \eqref{equality}.\\
\textbf{Case 2: All vectors $v\in A_0$ satisfy $v(t) =\ell \in L_i$.} Deleting row $t$ from $A_0$ then does not affect the $\arank$, so in fact $|A_0| \leq \exem(r-1,\k-\e_i) \leq O(r^{\Vert\k\Vert-1})$ by induction, again a contradiction.
\\
\textbf{Case 3: There is a vector $v\in A_0$ with $v(t) =0$.}  In this case, 
two applications of Lemma \ref{l.arank}, using rows $t$ and 1 in turn, show $\arank(M) \geq \arank(u \vert v \vert A_j')=\arank(v \vert A_j')+1 = \arank(A_j')+2=r+1,$ a contradiction.
%
\end{proof}

\begin{remark}\label{r.bound}
We can obtain an explicit bound on $Q_\k$ as follows. Claim \eqref{c.strict} was sufficient to prove the theorem since $\{\alpha_r^\k\}$ is bounded below by 0. But in fact, by recalling $\rank(M) \leq \arank(M)$ and applying Corollary \ref{c.idealbd},
\begin{align*}
\alpha_{Q_{\k}'}^\k \leq \exem^*(Q_{\k}',\k) \leq \exm(Q_{\k}'+1,\k)
& \leq
\sum_{\k' \preceq \k} \binom{Q_{\k}'+1}{\k'} \L^{\k'}\\
& \leq 
\binom{Q_{\k}'+1}{\k} \L^\k \sum_{\k' \preceq \k}  \left(\frac{\Vert\k'\Vert}{r}\right)^{\Vert\k-\k'\Vert}\\
& \leq \binom{Q_\k'+1}{\k} \L^\k
\sum_{i=0}^{\Vert\k\Vert} \binom{\Vert\k||}{i} \left( \frac{\Vert\k\Vert}{r}\right)^i \\
& = \binom{Q_\k'+1}{\k} \L^\k \left( 1+\frac{\Vert\k\Vert}{r} \right)^{\Vert\k\Vert}\\
& \leq O\big((Q_{\k}' \vert \L \vert)^{\Vert\k\Vert}\big),
\end{align*} 
e.g.\ taking $\vert \L \vert:=\max_i \{|L_i|\}$.
So the decreasing sequence $\{\alpha_r^\k\}$ stabilizes after $\leq O\big((Q_{\k}'\vert \L \vert)^{\Vert\k\Vert}\big)$ additional steps. Thus we can take $Q_\k:= Q_\k' + O\big((Q_\k'\vert \L \vert)^{\Vert\k\Vert} \big)$ in the theorem.
This way, $Q_\k$ is bounded by $(\vert \L \vert+1)^{O(\Vert\k\Vert^2)}$ as $\Vert\k\Vert \rightarrow \infty$.

\end{remark}

We are now ready to prove Theorem \ref{t.main}, and make the transition from affine rank to usual rank. 

\begin{proof}[Proof of Theorem \ref{t.main}]

Note that if every $L_i=\{ \ell_i\}$ and $\sum \ell_i k_i \neq 0$, the $a$-rank and rank coincide, and we are immediately done by Theorem \ref{t.even}.
%

So next suppose some $|L_i|>1$. If we take the $\L^\k \binom{r}{\k}$ $\lk$-vectors with some fixed support of size $r$, then the rank is exactly $r$. This gives the lower bound.

Now consider any collection of $\lk$-vectors of rank at most $r$.  By \eqref{sandwhich}, the $a$-rank is $\leq r+1,$ and Theorem \ref{t.even} shows the size is at most $\L^\k \binom{r}{\k}$, along with the uniqueness of the equality case.

Lastly we consider the case where $\forall i L_i = \{\ell_i\}$ but $ \sum \ell_i k_i =0$.  

For the lower bound, if we now take the $\L^\k\binom{r+1}{\k}$ $\lk$-vectors with some fixed support of size $r+1$, then the rank is at most $r$, since they all lie in the subspace $x \cdot (1,\dots, 1)=0$.

For the upper bound, any collection of $\lk$-vectors of rank at most $r$ has $a$-rank at most $r+1$, and we finish by Theorem \ref{t.even} again, this time concluding they number $\leq \L^\k \binom{r+1}{\k}$.

\end{proof}

We may generalise Theorem \ref{t.main} to an arbitrary set $S\subset \mathbb{Z}_{\geq 0}^{s}$ of possible weight vectors as follows. Recall that a vector is an $\ls$-vector whenever it is an $\lk$-vector for some $\k \in S$, and that $\exmf(r,S)$ is the maximum size of a collection of $\ls$-vectors whose rank is $\leq r$. We say the pair $(\L,\k)$ is \emph{shifted} if every $|L_i| = 1$, say $L_i=\{\ell_i\}$, and $\sum \ell_i k_i =0$ in $\F$. Thus, every $(\L,\k)$-vector automatically lies in the hyperplane $\sum x_j = 0$.
\begin{corollary}
Suppose that either $|L_i|=1$ for every $i$, or that $s=1$.

Then there is an $R_S$ such that for all $r \geq R_S,$
\[
\exmf(r,S) = 
	\begin{cases}
		\sum\limits_{\k \in S}  \binom{r+1}{\k} &
		\forall \k \in S\; (\L,\k) \text{ is shifted}, \\
		\sum\limits_{\k \in S} \L^\k \binom{r}{\k} & 
		\forall \k \in S,\; (\L,\k) \text{ is not shifted.}
	\end{cases}
\]

Moreover, any extremal matrix $M$ has only $r+1$ or $r$ nonzero rows respectively.
\end{corollary}

\begin{proof}

The equalities follow directly from Theorem \ref{t.main}, simply by decomposing a given collection of $(\L,S)$-vectors into respective $(\L,\k)$-vectors.

Now, let $M$ be any extremal matrix, and for each $\k \in S$, write $A_{\k}$ for its submatrix of $(\L,\k)$-vector columns. By the equality case of Theorem \ref{t.main}, each $A_\k$ has only $r+1$ or $r$ nonzero rows (in the respective cases). These supports are then identical, for otherwise $\rank(M) \geq r+1$ is witnessed by adding any $(\L,\k')$-vector $v$ with a nonzero entry outside of the support of $A_{\k}$ to $A_{\k}$.
\end{proof}

One may hope to combine the shifted and not-shifted vectors together in $S$, but having exactly $r$ rows appears not to always be optimal in this case. Indeed, consider vectors of weight 1 or 4 over $\F_2$ (so $\L=(\{1\})$, $S=\{1,4\}$).
Then one may vainly hope all $\binom{r}{4} + \binom{r}{1}$ possible $(\L,S)$-vectors in a matrix with exactly $r$ rows is optimal. But this can be improved to $\binom{r+1}{4}$ columns in a matrix with $r+1$ rows, by restricting to just the vectors of weight 4: without any weight-1 vectors, all columns will lie in the hyperplane $\sum x_j =0$, reducing their rank from $r+1$ to $r$.

\section{$k$ Zeros Proofs}\label{s.coweight}

We now proceed with the proof of Theorem \ref{t.co}. First, we establish a standard counting function:

\begin{lemma}
For any sequence $u_1, u_2, \dots$ of \emph{nonzero} elements of $\F_q$, and any number $n$,
the number of vectors $x \in (\F_q^\times)^n$ orthogonal to $(u_1, \dots, u_n)$ is
$a^{(0)}_n=\frac{1}{q}\big( (q-1)^n + (-1)^{n} (q-1) \big).$
\end{lemma}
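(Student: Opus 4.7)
The first step is to reduce to the case $u_1=\cdots=u_n=1$.  Since each $u_i \in \F_q^\times$ is a unit, the map $x \mapsto y$ defined coordinatewise by $y_i := u_i x_i$ is a bijection $(\F_q^\times)^n \to (\F_q^\times)^n$, and it carries the condition $\sum u_i x_i = 0$ to the condition $\sum y_i = 0$.  So $a_n^{(0)}$ does not depend on the $u_i$'s, and it suffices to count
\[
N_n \;:=\; \#\{\,y \in (\F_q^\times)^n : y_1+\cdots+y_n=0\,\}.
\]

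My plan is then to set up a one-step recursion.  For $s \in \F_q^\times$, let $M_n(s) := \#\{y \in (\F_q^\times)^n : \sum y_i=s\}$.  The map $y \mapsto s\cdot y$ is a bijection $(\F_q^\times)^n \to (\F_q^\times)^n$ that sends the fiber over $1$ to the fiber over $s$, so $M_n(s)$ is the same for every nonzero $s$; call the common value $M_n$.  Splitting $(\F_q^\times)^n$ according to the value of $\sum y_i$ yields $(q-1)^n = N_n + (q-1)M_n$, while conditioning on the last coordinate $y_n = t$ (which forces $y_1+\cdots+y_{n-1} = -t \in \F_q^\times$) yields $N_n = (q-1)M_{n-1}$.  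Combining these gives
\[
N_n + N_{n-1} \;=\; (q-1)^{n-1}, \qquad N_1 = 0.
\]

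This linear recurrence is solved by the ansatz $N_n = A(q-1)^n + B(-1)^n$: substituting forces $A = 1/q$, and the initial condition $N_1=0$ forces $B = (q-1)/q$, giving exactly the claimed formula.  An alternative, perhaps cleaner, derivation avoids the recursion entirely via orthogonality of additive characters: writing $\mathbb{1}[s = 0] = \frac{1}{q}\sum_{\psi} \psi(s)$ with the sum ranging over additive characters of $\F_q$,
\[
N_n \;=\; \frac{1}{q}\sum_\psi \Bigl(\sum_{t \in \F_q^\times} \psi(t)\Bigr)^{\!n},
\]
where the trivial character contributes $(q-1)^n$ and each of the $q-1$ nontrivial characters contributes $(-1)^n$ (since $\sum_{t\in \F_q}\psi(t)=0$ forces $\sum_{t\in\F_q^\times}\psi(t)=-1$).

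I do not expect a real obstacle here — the result is essentially a standard counting exercise over $\F_q$.  The only subtlety is the initial reduction observing that the $u_i$'s drop out (without which the statement looks more mysterious than it is), and then recognizing that the answer depends only on whether the target sum is zero or not, so that a single linear recurrence closes the problem.
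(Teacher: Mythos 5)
Your proof is correct and follows essentially the same route as the paper's: both exploit the scaling bijection to see that all the nonzero fibers $\{x\in(\F_q^\times)^n:\sum u_ix_i=s\}$ have equal size and then peel off the last coordinate to get a recurrence (the paper keeps the coupled pair $a^{(0)}_n,a^{(1)}_n$ while you eliminate one unknown to obtain the single recurrence $N_n+N_{n-1}=(q-1)^{n-1}$ --- a cosmetic difference). Your character-sum derivation is a clean independent alternative, but the main argument matches the paper's.
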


\begin{proof}
More generally, let $a^{(\beta)}_n:=|S_n^{\beta}|$, where $S_n^\beta :=\{x \in (\F_q^\times)^n: x_1u_1+\ \dots + x_nu_n=\beta\}$ . Since $x \mapsto \beta x$ is a bijection $S_n^{1}\rightarrow S_n^\beta$ for every $\beta \in \F_q^\times$, it follows $a^{(\beta)}_n = a^{(1)}_{n}$.
Furthermore, $|S^\alpha_{n+1}| =\sum_{\beta \neq \alpha} |S^\beta_n|$ for any $\alpha \in \F_q$, since any vector in $\bigsqcup_{\beta \neq \alpha} S^\beta_n $ can be uniquely extended to a vector in $S^\alpha_n$, since $v_n \neq 0.$ Thus, we have the recursive relations for each $n \geq 0$:
\begin{align*}
a^{(0)}_{n+1} &= (q-1)a^{(1)}_{n},
\\
a^{(1)}_{n+1} &= (q-2)a^{(1)}_{n} + a^{(0)}_{n}.
\end{align*}
Since $a_0^{(\beta)} = \mathbf{1}_{\beta = 0}$, the results $a_n^{(0)} = \frac{1}{q}\big( (q-1)^n + (-1)^{n} (q-1) \big)$ and
$a_n^{(1)}= \frac{1}{q}\big( (q-1)^n + (-1)^{n+1} \big)$ follow by a trivial induction (or may be derived directly using generating functions).
\end{proof}

For a fixed $r$, we use $X$ as shorthand for $\F_q^r$. Furthermore, for each $n \leq r$, we denote by $X^{\geq n}$ and $X^{=n},$ the sets of vectors of weight $\geq n$ and exactly $n$ respectively. Immediately note that $|X^{=n}| = \binom{r}{n}(q-1)^{n}$ for every $n$.

\begin{lemma}\label{l.spacecount}
Suppose $\v \in X^{\geq 2}$ has $i \geq 2$ non-zero entries, and $W$ is its orthogonal complement $W:=\v^\perp = \{\bx \in X: \bx \cdot \v = 0\}.$
Then 
 \begin{align*}
 |X^{=r-k} \cap W| 
 & = \frac{1}{q}
 \left(
\binom{r}{k} (q-1)^{r-k}+
(-1)^{i}(q-1)^{r-i-k+1}
\sum_{s=0}^k (1-q)^s \binom{i}{s} \binom{r-i}{k-s}
  \right) \\
 & \geq \frac{1}{q} \binom{r}{k}(q-1)^{r-k}\left( 1
 -\frac{1}{(q-1)}
 \right) \text{ for } r \text{ sufficiently large.}
 \end{align*}
\end{lemma}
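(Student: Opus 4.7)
\textbf{Proof plan for Lemma \ref{l.spacecount}.} The strategy is to partition the count according to how many of the $k$ zero-coordinates of $\bx$ lie inside the support of $\v$. Let $I \subset [r]$ be the $i$-element support of $\v$, so $\bx \cdot \v = \sum_{j\in I} x_j v_j$. First I would select the zero-set $Z \subset [r]$ of size $k$ by stratifying according to $s := |Z \cap I|$; there are $\binom{i}{s}\binom{r-i}{k-s}$ such choices. Having fixed $Z$, the orthogonality constraint depends only on the $i-s$ entries of $\bx$ indexed by $I \setminus Z$: these must lie in $(\F_q^\times)^{i-s}$ and satisfy $\sum_{j \in I \setminus Z} x_j v_j = 0$, which by the preceding lemma (applied to the nonzero coefficients $\{v_j : j \in I\setminus Z\}$) gives exactly $a_{i-s}^{(0)} = \frac{1}{q}\bigl((q-1)^{i-s} + (-1)^{i-s}(q-1)\bigr)$ admissible tuples. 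The remaining $r-i-k+s$ coordinates (those outside both $Z$ and $I$) are unconstrained in $\F_q^\times$, giving $(q-1)^{r-i-k+s}$ choices.

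Combining,
\[
|X^{=r-k} \cap W| = \sum_{s=0}^{\min(k,i)} \binom{i}{s}\binom{r-i}{k-s} (q-1)^{r-i-k+s} \cdot a_{i-s}^{(0)}.
\]
I would then split $a_{i-s}^{(0)}$ into its two terms. The $(q-1)^{i-s}$-piece contributes $\frac{1}{q}(q-1)^{r-k} \sum_s \binom{i}{s}\binom{r-i}{k-s}$, which collapses to $\frac{1}{q}(q-1)^{r-k}\binom{r}{k}$ via Vandermonde's identity. The $(-1)^{i-s}(q-1)$-piece contributes $\frac{1}{q}(-1)^i (q-1)^{r-i-k+1} \sum_s \binom{i}{s}\binom{r-i}{k-s} (1-q)^s$, after absorbing $(-1)^s(q-1)^s = (1-q)^s$. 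Adding these yields the asserted closed form.

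For the lower bound, the main term is $\frac{1}{q}\binom{r}{k}(q-1)^{r-k}$, so I need to show the error term is at least $-\frac{1}{q(q-1)}\binom{r}{k}(q-1)^{r-k}$. Taking absolute values and applying the triangle inequality, the error is bounded by $\frac{1}{q}(q-1)^{r-i-k+1} \sum_s \binom{i}{s}\binom{r-i}{k-s}(q-1)^s$. With $i,k$ fixed and $r \to \infty$, the $s=0$ term $\binom{r-i}{k}$ dominates the sum, since each increment in $s$ costs a factor of $\Theta(1/r)$ from the binomial while gaining only a bounded factor from $(q-1)^s\binom{i}{s}$. Thus for $r$ large the sum is at most $(1+o(1))\binom{r}{k}$, and the error is at most $\frac{1}{q}(q-1)^{r-k}\binom{r}{k}\cdot(q-1)^{1-i}(1+o(1))$. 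Since $i \geq 2$, $(q-1)^{1-i} \leq \frac{1}{q-1}$, and the $(1+o(1))$ factor is absorbed by the ``$r$ sufficiently large'' hypothesis.

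The only technical step requiring care is the tail comparison in the error sum: I would make it rigorous by writing $\binom{r-i}{k-s}/\binom{r-i}{k} \leq (k/(r-i-k))^{s}$ for $s \geq 1$, so that the ratio of the $s$-th to the $0$-th term is at most $\binom{i}{s}(q-1)^s(k/(r-i-k))^s$, which is summable in $s$ and $o(1)$ as $r \to \infty$. This guarantees the slack needed to pass from $(q-1)^{1-i}(1+o(1))$ to the clean $1/(q-1)$ bound stated in the lemma.
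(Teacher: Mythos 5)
Your derivation of the exact formula is correct and is the same as the paper's: decompose by the zero-set $Z$, stratify by $s=|Z\cap I|$, apply the preceding counting lemma to the $i-s$ coordinates of $I\setminus Z$, and collapse the main term by Vandermonde. The gap is in the inequality. By passing to absolute values you bound the error magnitude by $\frac{1}{q}(q-1)^{r-i-k+1}\sum_s \binom{i}{s}\binom{r-i}{k-s}(q-1)^s$, i.e.\ you replace the alternating sum by the sum of the $a_s:=\binom{i}{s}\binom{r-i}{k-s}(q-1)^s$. This is too lossy exactly in the boundary case $i=2$, where $(q-1)^{1-i}=\frac{1}{q-1}$ leaves no slack: for $i=2$, $q\geq 3$, $k\geq 1$ one computes
\[
\binom{r-2}{k}+2(q-1)\binom{r-2}{k-1}+(q-1)^2\binom{r-2}{k-2}=\binom{r}{k}\left(1+\frac{(2q-4)k}{r}+O(k^2/r^2)\right)>\binom{r}{k},
\]
so your error bound strictly exceeds the allowed $\frac{1}{q(q-1)}\binom{r}{k}(q-1)^{r-k}$ and you only obtain $1-\frac{1+o(1)}{q-1}$ in place of $1-\frac{1}{q-1}$; the $(1+o(1))$ is not absorbed, because there is nothing to absorb it into. (In fact for $i=2$ and $r$ large the alternating sum is positive, so the error has the \emph{helpful} sign --- but your triangle inequality throws that away.) A secondary issue: your tail comparison treats $i$ as bounded, but $i$ ranges up to $r$, and for $i=\Theta(r)$ the $s=0$ term no longer dominates since $\binom{i}{s}$ grows with $r$; a bound uniform in $i$ is needed because the lemma is applied to arbitrary $\v\in X^{\geq 2}$.

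The paper closes both gaps by keeping the signs: it observes that $\{a_s\}$ is unimodal (the ratio $a_s/a_{s+1}$ is increasing in $s$), so the alternating sum $\sum_s(-1)^s a_s$ is bounded in absolute value by the \emph{single} largest term $\max_s a_s$, and then shows $\binom{r}{k}(q-1)^{i-1}\geq (q-1)\,a_s$ for that $s$ via Vandermonde, with explicit case analysis that works uniformly in $i$ under $r\geq\max\{q^{1/2}k^{3/2},qk\}$. To repair your argument you would either need to adopt this single-term bound, or handle $i=2$ (and large $i$) separately by tracking the sign of the alternating sum.
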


\begin{proof}
WLOG, $\v=(v_1, \dots, v_i, 0, \dots, 0)$ where $v_1, \dots, v_i$ are all non-zero.

For each $S \in \binom{[r]}{k},$ let $W_S:= \{\bx \in X^{=r-k} \cap W: \{j\in [r] : x_j = 0\} = S \}$, so we may decompose $W$ as $\bigcup_{s=0}^k
\bigcup_{|S \cap [i]| = s} W_S$.

We claim that, if $|S \cap [i]| = s$, then 
$|W_S| = \frac{1}{q} \left(
(q-1)^{r-k} +(-1)^{i+s}(q-1)^{r-i-k+s+1}   
\right)$. Since there are $\binom{i}{s} \binom{r-i}{k-s}$ such $S \in \binom{[r]}{k}$ with $|S \cap [i]|=s$, the above decomposition gives the result.

To see the claim, note
\[
\bx \in W_S \Leftrightarrow 
\left\lbrace \begin{array}{ccc}
x_j = 0 & \forall j \in S\\
x_j \neq 0 & \forall j \in [r] \backslash S\\
\sum\limits_{j \in [r] \backslash S } x_j v_j =0 &
\end{array} \right.
\Leftrightarrow 
\left\lbrace \begin{array}{ccc}
x_j = 0 & \forall j \in S & \\
x_j \neq 0 & \forall j \in [r] \backslash (S \cup [i])&\\
x_j \neq 0 & \forall j \in [i] \backslash S &\wedge \sum\limits_{j \in [i] \backslash S } x_j v_j =0. 
\end{array} \right.
\]
Applying the lemma to $(u_1, \dots, u_n):= \proj_{[i]\backslash S}(\v)$ and noting $n=i-s$, we see there are $\frac{1}{q}\big( (q-1)^{i-s} + (-1)^{i-s} (q-1) \big)$ ways to choose the entries of $\bx$ in $S \cap [i]$.
Furthermore, there are $(q-1)^{|[r] \backslash (S \cup [i])|} = (q-1)^{r-k-i+s}$ ways to choose the entries of $\bx$ in $[r] \backslash (S \cup [i]),$ and so there are
$\frac{1}{q} \left(
(q-1)^{r-k} +(-1)^{i+s}(q-1)^{r-i-k+s+1}   
\right)$ such $\bx$ in total.

We now proceed to prove the claimed inequality.
Let $a_s := \binom{i}{s}\binom{r-i}{k-s}(q-1)^s$,
for each $0 \leq s \leq k$.

Note that, for $s < i,$ $\frac{a_{s}}{a_{s+1}}
= \frac{(s+1)(r-k-i+s+1)}{(i-s)(k-s)(q-1)}$ is an increasing function in $s$ (and for $s>i$ $a_s=0$ anyway). Hence the sequence $\{a_s\}$ is \emph{unimodal}, i.e.\ consists of a (possibly empty) monotonically increasing subsequence followed by a decreasing subsequence.
In particular, the alternating sum
$\sum_{s=0}^k (-1)^s a_s $ is bounded above by $\max_s \{a_s\}.$ Let us fix the $s$ attaining this maximum.

Now, 
\[\frac{\binom{r}{k} (q-1)^{i-1}}{a_s}
= (q-1)^{i-1-s}\frac{\binom{r}{k}}{\binom{i}{s}\binom{r-i}{k-s}} \geq q-1
\] provided $i \geq s+2$, since the denominator is a single term in the identity
$\sum_{s'} \binom{i}{s'} \binom{r-i}{k-s'} = \binom{r}{k}$. Else, $i \in \{s,s+1\}.$ We check the lower bound still holds here:

When $i=s+1$, the above is
$ \frac{\binom{r}{k}}{i\binom{r-i}{k-i+1}}
\geq \frac{\binom{r}{k}}{i\binom{r-i}{k-i}}
= \frac{\binom{r}{i}}{i\binom{k}{i}}
\geq \frac{(r-1)^2}{i(k-1)^2}
$
(using $i \geq 2$ and e.g.\  $r \geq 2k$).

Similarly, if $i=s$, the above is 
$\frac{\binom{r}{k}}{(q-1)\binom{r-i}{k-i}}
= \frac{\binom{r}{i}}{(q-1)\binom{k}{i}}
\geq \frac{(r-1)^2}{(q-1)(k-1)^2}$.

So these are both still $\geq q-1$, assuming $r \geq \max \{q^{1/2}k^{3/2},qk\}.$ In summary,
\begin{align*}
\frac{1}{q} (q-1)^{r-i-k+1}
\left(
\binom{r}{k}(q-1)^{i-1}
+\sum_{s'=0}^k (-1)^{s'+i} a_{s'}
\right)
&\geq 
\frac{1}{q} (q-1)^{r-i-k+1}
\left(
\binom{r}{k}(q-1)^{i-1}
- a_s
\right) \\
& \geq 
\frac{1}{q} \binom{r}{k}(q-1)^{r-k}
\left(
1-\frac{1}{q-1}
\right).
\end{align*}

\end{proof}

Our remaining tool is a standard observation in abstract linear algebra. For a vector space $V$ over $\F_q$, recall that the \emph{dual space} $V^*$ consists of all linear functions $V \rightarrow \F_q$, and has the same dimension as $V$ (when finite).

\begin{lemma}
Suppose $f_1, \dots, f_r \in V^*$ and $\bigcap_{j=1}^r V_j = \{0\}$. Then $f_1, \dots, f_r$ are linearly independent.
\end{lemma}

We are now in a position to prove the nonzero case of Theorem \ref{t.co}. This time, the extremal matrices cannot have \emph{any} duplicate rows, nor scalings thereof.

\begin{theorem}
Let $k \geq 1$.
Then
$\bexq (r,k) = \binom{r}{k} \cdot (q-1)^{r-k}$,
 provided $r\geq \max\{3q^2k,q^{1/2}k^{3/2}\}$. Furthermore, the unique extremal example is a matrix $M$ consisting of only $r$ rows and all possible columns.
\end{theorem}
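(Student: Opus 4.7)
The approach is to combine the injective-projection framework of Lemma~\ref{l.idealbd} with the hyperplane-intersection count of Lemma~\ref{l.spacecount}, showing that any extra row of $M$ beyond an $r$-row basis strictly decreases the column count.

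First I would reduce to $M$ of rank exactly $r$ with no zero rows: smaller-rank matrices yield strictly smaller bounds $\binom{r'}{k}(q-1)^{r'-k} < \binom{r}{k}(q-1)^{r-k}$ for $r$ large, and deleting a zero row drops the per-column zero count from $k$ to $k-1$, with $\binom{r}{k-1}(q-1)^{r-k+1} < \binom{r}{k}(q-1)^{r-k}$ once $r > kq - 1$. Following Lemma~\ref{l.idealbd}, pick $r$ rows of $M$ forming a row-space basis, WLOG the first $r$, giving an injective projection $\pi: \cC \hookrightarrow \F_q^r$. Each of the $m := n - r$ remaining rows is uniquely $\bx_j^\top \cdot (\text{basis rows})$ for some nonzero $\bx_j \in \F_q^r$. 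A column projecting to $u$ has exactly $k$ zeros iff $\mathrm{wt}(u) - z(u) = r - k$, where $z(u) := |\{j : \bx_j \cdot u = 0\}|$; hence $|\cC| \leq \sum_{j=0}^{\min(m,k)} N_j$ for $N_j := |\{u \in X^{=r-k+j} : z(u) = j\}|$. When $m = 0$: $|\cC| \leq N_0 \leq |X^{=r-k}| = \binom{r}{k}(q-1)^{r-k}$, with equality iff $\cC = X^{=r-k}$, the canonical extremal example.

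For $m \geq 1$ the strict inequality $|\cC| < \binom{r}{k}(q-1)^{r-k}$ reduces, via $N_0 = |X^{=r-k}| - |X^{=r-k} \cap \bigcup_j \bx_j^\perp|$, to
\[
\sum_{j \geq 1} N_j \;<\; \Big| X^{=r-k} \cap \bigcup_{j=1}^m \bx_j^\perp \Big|.
\]
The LHS admits the bound $\sum_{j \geq 1} N_j \leq \sum_{j \geq 1} |X^{=r-k+j}| \leq 2|X^{=r-k+1}| = \tfrac{2k(q-1)}{r-k+1}|X^{=r-k}|$, uniform in $m$, using the geometric decay $|X^{=r-k+j+1}|/|X^{=r-k+j}| = \tfrac{(k-j)(q-1)}{r-k+j+1} \leq \tfrac{1}{2}$ for $r$ large. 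If some $\bx_{j_0}$ has weight $\geq 2$, Lemma~\ref{l.spacecount} lower-bounds the RHS by $\tfrac{q-2}{q(q-1)}|X^{=r-k}|$ (valid for $r \geq \max\{q^{1/2}k^{3/2}, qk\}$). Hence LHS $<$ RHS whenever $r - k + 1 > \tfrac{2kq(q-1)^2}{q-2}$, which for $q \geq 3$ is implied by $r \geq 3q^2 k$ --- matching the theorem's hypothesis.

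The main obstacle is the case where every $\bx_j$ has weight exactly $1$ (Lemma~\ref{l.spacecount} requires weight $\geq 2$, and weight $0$ is excluded by the no-zero-rows reduction). Here each extra row is $\lambda_j$ times some basis row $t_j$; setting $c_t := |\{j : t_j = t\}|$, a column is valid iff its zero pattern $s \in \{0,1\}^r$ on the first $r$ coordinates satisfies $\sum_t (1 + c_t) s_t = k$, contributing $(q-1)^{r - |s|}$ columns per pattern. A direct decomposition of the valid patterns, visible already in the toy case $m = 1$ where $|\cC| = \binom{r-1}{k}(q-1)^{r-k} + \binom{r-1}{k-2}(q-1)^{r-k+1} < \binom{r}{k}(q-1)^{r-k}$ for $r > (k-1)q$, establishes the strict inequality. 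Uniqueness of the canonical extremal example then follows from strictness of every comparison above, forcing $m = 0$ and $\cC = X^{=r-k}$.
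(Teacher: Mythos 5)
Your architecture is in substance the paper's own, transposed to the row space: your vectors $\bx_j$ are exactly the dual vectors of the paper's hyperplanes $V_j$, your dichotomy ``some $\bx_j$ of weight $\geq 2$ versus all of weight $1$'' is the paper's ``some $T(V_j)$ not a coordinate hyperplane versus all coordinate hyperplanes,'' and your weight-$\geq 2$ case (inclusion--exclusion on $X^{=r-k}$ plus Lemma \ref{l.spacecount} plus the geometric tail bound $\sum_{j\geq 1}|X^{=r-k+j}|\leq 2|X^{=r-k+1}|$) matches the paper's computation line for line, with the same numerology landing on $r\geq 3q^2k$. That part, together with the reduction to full rank and no zero rows (which implicitly runs an induction on $k$ with Corollary \ref{c.kzero} as the base case --- worth saying explicitly), is fine.

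The genuine gap is the all-weight-one case. You verify only $m=1$ and assert that ``a direct decomposition of the valid patterns'' handles the general multiplicity profile $(c_t)$, but the quantity you must control is $\sum_{s:\,\sum_{t\in s}(1+c_t)=k}(q-1)^{k-|s|}$ versus $\binom{r}{k}$, and the obvious extensions of your $m=1$ computation do not close this. A termwise comparison (grouping patterns by $A=s\cap\{t:c_t\geq 1\}$ and comparing $\binom{r-p}{k-|A|-d_A}(q-1)^{d_A}$ against $\binom{r-p}{k-|A|}$) works only when $r-p$ is large relative to $qk$, and $p=|\{t:c_t\geq1\}|$ can be as large as $r$; while the crude bound ``sum of all lower-weight levels $<$ deficit $\binom{r}{k}-\binom{r-p}{k}$'' fails outright when $p$ is small, since the deficit is only about $\tfrac{pk}{r}\binom{r}{k}$ while the lower levels contribute about $\tfrac{k(q-1)}{r}\binom{r}{k}$ \emph{before} restricting to patterns meeting $T_+$. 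So some nontrivial additional accounting is needed here. The statement is true, and the paper's device closes it cleanly in your setup too: in this case $z(u)=\sum_{t\in Z_u}c_t$, so $u\mapsto\sum_{t\in Z_u}(1+c_t)$ is strictly increasing as the zero-set grows, hence $\pi(\cC)$ is an antichain in the poset on $\F_q^r$ ordered by reverse containment of zero-sets; the LYM inequality then gives $|\cC|\leq|X^{=r-k}|$ (using $r\geq qk$ so the level sizes decrease), and equality would force $\pi(\cC)=X^{=r-k}$, which is impossible for $m\geq 1$ since any $u$ whose zero-set meets $T_+$ is invalid. I would either import that argument or supply the general decomposition in full; as written, the uniqueness claim also rests on this unproven strictness.
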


\begin{proof}
We may assume $\rank(M)=r$, and that all rows are distinct.
Let $Y$ denote the set of columns of $M$, with span $ \langle Y \rangle = V$. If $r'$ denotes the number of rows of $M$, then $V \leq \F_q^{r'}$ is a subspace of dimension $r$.

For each $j \in [r']$, we have $V_j':= \{ \y \in \F_q^{r'}:y_j=0\}$ is codimension-1 in $\F_q^{r'},$ and hence $V_j := V_j' \cap V$ is codimension-$\leq 1$ in $V$. Whenever $\dim(V_j)=r-1$, we say that row $j$ is \emph{nontrivial}, and observe
$V_j =\{\y \in V: f_j(\y)=0\}$ for some $f_j \in V^*$, the dual space of $V$ (namely, linear functions $V \rightarrow \F_q$).
 Otherwise, $V_j = V$, and we say row $j$ is \emph{trivial}. (In fact, every trivial row of $M$ is necessarily all zeros, but we will not need this for the argument.)

In fact, since the row rank of $M$ is $r$, we see there are some $r$ rows $I$ which are linearly independent. Thus, the projection onto just these coordinates is a linear isomorphism $\Pi_I:V \rightarrow \F_q^I (\simeq \F_q^r)$.
In this way, for every nontrivial $j$, $\Pi_I(V_j) \leq \F_q^I$ is also a codimension-1 subspace.

  We have by assumption that every $\y \in Y$ is in \emph{exactly} $k$ of the $\{V_j\}$ counting multiplicities, and hence in $\kappa$ of the $\{V_j : j \text{ nontrivial} \}$, where $\kappa: = k - |\{j \text{ trivial}\}|$.

As such, for every $\y \in Y$, $\Pi_I(\y)$ is in exactly $k$ of the mapped subspaces $\cf:=\{\Pi_I(V_j): j \text { nontrivial} \}$
(viewed as a multiset).
In particular, $\cf$ contains every coordinate subspace $\e_\ell^\perp$ \emph{at least once}
(as $\e_\ell^\perp = \Pi_I(V_\ell)$ for each $\ell \in I$).
 Every $\Pi_I(\y)$ is in $\leq \kappa$ of these coordinate subspaces, and hence has $\leq \kappa$ zeros. Deduce
$\Pi_I(Y) \subset X^{\geq r-\kappa}$, so we immediately obtain $|Y|=|\Pi_I(Y)| \leq 
\binom{r}{\kappa}(q-1)^{r-\kappa} + \binom{r}{\kappa-1}(q-1)^{r-\kappa+1}+ \dots + (q-1)^r$.
Of course, that was something we already established in Corollary \ref{c.idealbd}, but we will need this setup to help remove the trailing terms.

Suppose first that there is some $W \in \cf$ which is not a coordinate hyperplane.
We will show that $|Y|$ is too small in this case. Now, by dimension counting, $W^\perp = \langle \v \rangle$ for some $\v \in X$. Plus, as $W$ is not a coordinate hyperplane, $\v$ has $\geq 2$ non-zero entries. Thus, the previous lemma shows there are many vectors of weight $r-\kappa$ in $W$.

In fact, $\Pi_I(Y) \subset (X^{=r-\kappa} \backslash W) \cup (X^{\geq r-\kappa+1})$, since all vectors in $X^{\leq r-\kappa-1} \cup (X^{=r-\kappa}\cap W)$ are in $\geq \kappa+1$ spaces in $\cf$.
Also note that $|X^{\geq r-\kappa+1}| \leq 2 |X^{={r-\kappa+1}}|
= 2\binom{r}{\kappa-1}(q-1)^{r-\kappa+1}$ provided $r\geq 2q \kappa $. 
Putting these together with Lemma \ref{l.spacecount},
\begin{align*}
|Y|=|\Pi_I(Y)|
& \leq |X^{=r-\kappa}| - |X^{=r-\kappa} \cap W| + | X^{\geq r-\kappa+1}|\\
&\leq \binom{r}{\kappa}(q-1)^{r-\kappa}
-\frac{1}{q} \binom{r}{\kappa}(q-1)^{r-\kappa} \left( 1-\frac{1}{(q-1)}\right)
+2 \binom{r}{\kappa-1}(q-1)^{r-\kappa+1}\\
& < \binom{r}{\kappa}(q-1)^{r-\kappa},
\text{ if } r \geq 3q^2 \kappa.
\end{align*}

As such, we may assume every $\Pi_I(V_j) \in \cf$ is some coordinate hyperplane $\e_\ell^\perp$. However, we still are not yet sure that the original subspaces $\{V_j\}$ were distinct (in the way that the $\{V_j'\}$ are): there may be collisions upon intersection with $V$. 

For each $\bx \in \F_q^r$, we denote by $Z_\bx$ its zero-set $\{\ell: x_\ell = 0\}$. Also, letting $w(\bx):= |\{W \in \cf: \bx \in W\}|$ (counting multiplicities), we see that every $\bx \in \Pi_I(Y)$ has $w(\bx)=\kappa$.
Form a poset structure on $X=\F_q^r$ by $\bx \prec \by \Leftrightarrow Z_\bx \supsetneq Z_\by$: thus, $(X, \preceq)$ looks like a blowup of the Boolean lattice where each vector of weight $n$ has been blown up $(q-1)^n$ times. Also, since 
$\cf$ contains each coordinate subspace at least once, $w$ is a strictly increasing function on $(X,\preceq)$, 
and hence $\Pi_I(Y)$ forms an antichain.

This satisfies a LYM-type inequality (see e.g.\ \cite{bollobas1986combinatorics} for an exposition we will mimic here):
for any arbitrary $A\subset X$, write $A^{=i}:=A \cap X^{=i}$ for each $i \leq r$. Then a random maximal chain $C$ in $X$ satisfies $\mathbb{E}[|C \cap A|] = \sum_{i \leq r} \frac{|A^{=i}|}{|X^{=i}|}$ by symmetry. For the antichain $A:=\Pi_I(Y)$, deduce this is $\leq 1$. Furthermore, with $r \geq qk \geq q \kappa$, we have $|X^{=r-\kappa}|> |X^{=r-\kappa+1}|> \dots > |X^{=r}|$, and hence 
\[
1\geq \sum_{i \leq r} \frac{|A^{=i}|}{|X^{=i}|}
= \sum_{r-\kappa \leq i \leq r} \frac{|A^{=i}|}{|X^{=i}|}
\geq \sum_{r-\kappa \leq i \leq r} \frac{|A^{=i}|}{|X^{=r-\kappa}|}
=\frac{|A|}{|X^{=r-\kappa}|},
\]
so $|Y| =|A| \leq |X^{=r-\kappa}| = \binom{r}{\kappa}(q-1)^{r-\kappa} \leq \binom{r}{k}(q-1)^{r-k}$ is immediate. In the equality case, $k = \kappa$, and all rows were nontrivial. Also, every $\frac{|A^{=i}|}{|X^{=i}|} = \frac{|A^{=i}|}{|X^{=r-k}|}$ for $i > r-k$, hence they are all 0, from which it follows $\Pi_I(Y)=A=X^{=r-k}$.

Deduce $\cf=\{\e_1^{\perp},\dots, \e_r^\perp \}$ with no repeated subspaces, so $r'=r$ and $M$ only had $r$ rows originally.
\end{proof}

\section{Concluding Remarks and Further Questions}\label{s.conc}


In light of Theorems \ref{t.mainspecial} and \ref{t.co}, one may naively hope that $\bexq(r,k) = \exq(r,r-k)$ for some reasonable values of $r,k$ and $q$, but this is very far from being true, so there is a limit to how small we can make $R_{k,q}$ and  $\bar{R}_{k,q}$. Indeed, $\bexq(r,k)$ is an increasing function of $k$ (for fixed $r$ and $q$), since adding a row of zeros does not increase the rank of a matrix. Plus, while adding rows of all $1$'s might increase the rank, it does not increase the $a$-rank, so $\exem(r,\k)$ is also an increasing function of $\k$.

Even more strikingly,
$\exq(r,k) = \bexq(r,k)=0$ for negative $k$, whereas $\exq(r,k), \bexq(r,k)$ can be defined for $k >r$ and are clearly positive: in fact, $\exq(r,(q-1)q^{r-1})= \bexq(r,q^{r-1}) = q^r-1$. This is clearly the most possible for \emph{any} $k$, since an $\F_q$-vector space of dimension $r$ only has $q^r$ distinct elements in total, including $0$. 

The matrix $M$ attaining the above is simply the \emph{dual Hamming code} \cite{hamming1950error} of length $q^r$, as noted in the concluding section of Ahlswede, Aydinian and Khachatrian \cite{ahlswede2003maximum} (and in fact, was shown to be essentially the unique such matrix up to repetition by Bonisoli \cite{bonisoli1984every}).
Explicitly, we list all $q^r$ vectors $\F_q^r=\{\v_1, \dots, \v_{q^r}\}$
as the \emph{rows} of a matrix $A$,
then let the columns of $M$ consist of all nonzero vectors in the column space of $A=(\bu_1 | \dots | \bu_r)$,
so $\rank(M) =r$.
Now, the $i$-th entry of a column $\sum \l_j \bu_j$ of $M$ is zero if and only if $\v_i$ is in the hyperplane $\{\sum_j \l_j x_j = 0\}$. This is true for exactly $q^{r-1}$ such vectors $\v_i \in \F_q^r$, and hence every column of $M$ has weight $(q-1)q^{r-1}$.

So, we know these theorems cannot be extended arbitrarily. But we can still ask about the threshold functions:

\begin{question}
How small can $R_{k,q}$ and $\bar{R}_{k,q}$ be made in Theorems \ref{t.mainspecial} and \ref{t.co}?
\end{question}

Theorem \ref{t.co} was established directly, obtaining the result for $\bar{R}_{k,q} = O_q(k^{3/2})$. In sharp contrast, the proof of Theorem \ref{t.mainspecial} used an induction for which we were unable to directly establish a base case, and is only known for $R_{k,q} = 2^{O_q(k^2)}$.

Now, $\bar{R}_{k,q}$ can't be made independent of $q$. Once $r < qk$, we note that $\binom{r}{k}(q-1)^{r-k} < \binom{r}{k-1}(q-1)^{r-k+1}$, so (for example) our usual example is beaten by a matrix consisting of all co-weight $k-1$ vectors of length $r$, and then appending a row of all $0$'s. Perhaps it is still true that $\bexq(r,k) = \binom{r}{k'}(q-1)^{r-k'}$ for some $k'<k$, using matrices with lots of empty rows. Similar logic shows that $R_{k,q}$ can't be made smaller than $\frac{q}{q-1}k$.
Yet, it is still plausible that e.g.\ $\ex_2(2k,k) = \binom{2k}{k}$ for every odd $k$, and even that $R_{k,q}$ can be made independent of $q$.

Furthermore, we wonder whether Theorem \ref{t.co} can be generalized in a similar fashion to Theorem \ref{t.main}. 
This leads to questions that lie strictly between the original two, the simplest instance of which is the following:
\begin{question}
Does every rank-$r$ matrix over $\F_4$ have $\leq \binom{r}{k} \cdot 2^r$ columns with exactly $k$ entries either 0 or 1 (for all $r$ sufficiently large)?
\end{question}

%
%
%
%
%
%
%
%
%
%
%
%
%
%
%
%
%
%
%
%
%
%
%
%
%
%
%
%
%

\section{Acknowledgements}

We would like to thank Boris Bukh for helpful discussions and for suggestions significantly improving a previous draft of the paper, and Imre Leader for informing us of the relevance of 
\cite{ahlswede2003maximum}.
We are also indebted to an anonymous referee for their very careful reading and finding of a number of mistakes in an earlier version of the paper.

\bibliographystyle{abbrv}
\bibliography{ExtremalMatroidsReferences}

\end{document}